\documentclass[12pt]{amsart}

\pdfoutput=1

\usepackage{amssymb,amsfonts,latexsym,amsmath,amsthm,graphicx}

\numberwithin{equation}{section}
\newtheorem{thm}{Theorem}[section]
\newtheorem{prop}[thm]{Proposition}
\newtheorem{cor}[thm]{Corollary}
\newtheorem{lemma}[thm]{Lemma}

\theoremstyle{remark}

\newtheorem*{definition}{Definition}

\newcommand{\RR}{\mathbb{R}}
\newcommand{\CC}{\mathbb{C}}

\usepackage{xcolor}

\newcommand{\ol}{\overline}

\newcommand{\Vol}{\text{Vol}}

\newcommand{\p}{\partial}

\newcommand{\be}{\begin{equation}}
\newcommand{\ee}{\end{equation}}

\begin{document}

\title[Ellipsoids in potential theory]{A tale of ellipsoids in potential theory}
\date{July 2013}

\author[Dmitry Khavinson]{Dmitry Khavinson }
\address{Dept. of Mathematics and Statistics, University of South Florida, Tampa, FL 33620}

\author[Erik Lundberg]{Erik Lundberg}
\address{Dept. of Mathematics, Purdue University, West Lafayette, IN 47906}

\thanks{The first author acknowledges support from the NSF grant DMS - 0855597}

\keywords{ellipsoid, mean value property, Dirichlet's problem, Ivory's theorem, Newton's theorem}


\begin{abstract}

Ellipsoids possess several beautiful properties associated with classical potential theory.
Some of them are well known, and
some have been forgotten.
In this article we hope to bring a few of the ``lost'' pieces of classical mathematics back to the limelight.

\end{abstract}

\maketitle

\section{Dirichlet's problem}

Let us start our story with
the Dirichlet problem.
This problem of finding a harmonic function
in a, say, smoothly bounded domain $\Omega \subset \RR^n$
matching a given continuous function $f$ on $\partial \Omega$
gained huge attention in the second half of the nineteenth century due to its central role in Riemann's proof
of the existence of a conformal map of any simply connected domain onto the disk.
Later on Riemann's proof was criticized by Weierstrass, and,
after a considerable turmoil,
corrected and completed by Hilbert and Fredholm - cf. \cite{Reid96}
for a very nice historical account.
Here, we want to focus on algebraic properties of solutions to the Dirichlet problem when 
$\Omega$ is an ellipsoid and the data $f$ possess nice algebraic properties.
Thus, we first present the following proposition.

\begin{prop}\label{prop:DP}
Let $$\Omega = \left\{ x \in \RR^n : \sum_{j=1}^n \frac{x_j^2}{a_j^2} -1 \leq 0, \quad a_1 \geq a_2 \geq .. \geq a_n > 0 \right\}$$
be an ellipsoid.
The solution $u$ to the Dirichlet problem
\begin{equation}
\label{eq:DP}
\left\{
\begin{array}{l}
\Delta u = 0  \quad \text{in } \Omega \\
u|_{\partial \Omega} = p
\end{array}\right.,
\end{equation}
where $p$ is a polynomial of $n$ variables, is a harmonic polynomial.
Moreover,
\begin{equation}\label{eq:deg}
\deg u \leq \deg p. 
\end{equation}
\end{prop}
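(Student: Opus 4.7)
The plan is to build $u$ via an ansatz: seek $g \in P_{d-2}$ (where $P_k$ denotes polynomials of degree at most $k$ in $n$ variables and $d = \deg p$) so that $u := p - Qg$ is the desired harmonic polynomial, with
$$Q(x) := \sum_{j=1}^n \frac{x_j^2}{a_j^2} - 1$$
being the defining polynomial of $\Omega$. Because $Q$ vanishes on $\partial\Omega$, any such $u$ automatically agrees with $p$ on $\partial\Omega$ and has degree at most $d$, and it is harmonic in $\Omega$ precisely when $\Delta(Qg) = \Delta p$. Since $\Delta p \in P_{d-2}$, the whole problem reduces to showing that the linear map
$$\Phi : P_{d-2} \longrightarrow P_{d-2}, \qquad \Phi(g) := \Delta(Qg),$$
is surjective.

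Because $P_{d-2}$ is finite-dimensional, it suffices to show $\Phi$ is injective. Suppose $\Phi(g) = 0$ and set $v := Qg$. Then $v$ is a harmonic polynomial with $v|_{\partial\Omega} \equiv 0$ (since $Q$ vanishes there). The maximum principle for harmonic functions on the bounded domain $\Omega$ forces $v \equiv 0$ on $\bar\Omega$, and real analyticity of the polynomial $v$ then propagates this to $v \equiv 0$ on all of $\RR^n$. Because $Q \not\equiv 0$, we conclude $g \equiv 0$, giving injectivity and hence surjectivity of $\Phi$.

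Surjectivity supplies a $g \in P_{d-2}$ with $\Delta(Qg) = \Delta p$. The polynomial $u := p - Qg$ is then harmonic, has degree at most $d$, and equals $p$ on $\partial\Omega$. By the standard uniqueness for the Dirichlet problem, this $u$ is the solution to \eqref{eq:DP}, and the inequality \eqref{eq:deg} is built directly into the construction. The single nontrivial step is the injectivity of $\Phi$, where the specific geometry of the ellipsoid enters through the identity $\partial\Omega = \{Q = 0\}$: the maximum principle, combined with real analyticity, is what turns vanishing on $\partial\Omega$ of the harmonic polynomial $Qg$ into vanishing on all of $\RR^n$, and thereby into $g \equiv 0$.
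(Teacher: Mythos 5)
Your proposal is correct and is essentially the paper's own argument: both define the operator $g \mapsto \Delta(Qg)$ on a finite-dimensional polynomial space, prove injectivity via the maximum principle (your added remark on real analyticity just makes the step from vanishing on $\overline{\Omega}$ to $g\equiv 0$ explicit), deduce surjectivity by dimension count, and set $u = p - Qg$.
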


\remark
Proposition \ref{prop:DP} was widely known in the nineteenth century for $n=2,3$ (perhaps due to Lam\'e)
and was proved with the use of ellipsoidal harmonics (see \cite[Section 58]{Akh90} for a discussion of Laplace's equation in elliptic coordinates).
It is still widely known nowadays for balls but often disbelieved for ellipsoids.
The elder author has won a substantial number
of bottles of cheap wine betting on its truthfulness at various math events and then producing the following proof
that was related to him by Harold S. Shapiro. 
We do not know who thought of it first, but we hope the reader will agree that it deserves to be called, following P. Erd\"os, the ``proof from the book''.

\begin{proof}
Denote by $P_{n,m} =  P_m$ the finite-dimensional space of polynomials of degree $\leq m$ in $n$ variables.
Let $q(x) = \sum{\frac{x_j^2}{a_j^2}}-1$ be the defining quadratic for $\partial \Omega$.  
Consider the linear operator $T : P_m \rightarrow P_m$ defined by
$$T ( r) := \Delta(q r).$$ 
The maximum principle yields at once that $\ker T = 0$,
so $T$ is injective.
Since $\dim P_m < \infty$, this implies that $T$ is surjective.

Hence, given $P \in P_m$ with $m \geq 2$, we can find a polynomial $r \in P_{m-2}$
such that
$T r = \Delta P$.
The function
$$u = P - q r $$
is then the solution of (\ref{eq:DP}).
\end{proof}

Proposition \ref{prop:DP} was extended \cite{KS92} to the case of entire data.
Namely, entire data $f$ (i.e., an entire function of variables $x_1, x_2, .., x_n$) yields an entire solution to the
Dirichlet problem in ellipsoids.
This result was sharpened by Armitage in \cite{Arm2004}
who showed that the solution's order and type are dominated by that of the data.

One might get bold at this point and ask does the Proposition \ref{prop:DP} extend to say rational or algebraic data,
i.e., does a smooth data function in (\ref{eq:DP}) that is a rational (algebraic) function of $x_1,x_2,..,x_n$
imply rational (algebraic) solution $u$?
The answer is a resounding ``no'' but the proofs become technically more involved - see \cite{BEKS2006, BEKS2007, Eben92, EKS2005}.

\subsection{The Dirichlet problem, ellipsoids, and Bergman orthogonal polynomials}

It was conjectured in \cite{KS92} that Prop. \ref{prop:DP} (without the degree condition (\ref{eq:deg}))
characterizes ellipsoids.
Recently, using ``real Fischer spaces'', H. Render confirmed this conjecture for many
algebraic surfaces \cite{Render}.
In two dimensions, the conjecture was confirmed under a degree-related condition on the solution in terms of the data \cite{KhSt}.
This utilized a suprising equivalence, established by N. Stylianopoulos and M. Putinar \cite{PutSty}, 
of the conjecture to the existence of finite-term recurrence relations for Bergman orthogonal polynomials.
In order to state the degree conditions and the associated recurrence conditions, assume $\Omega$ is a domain in $\RR^2$ with $C^2$-smooth boundary.
Let $\{p_m\}$ be the Bergman orthogonal polynomials (orthogonal w.r.t. area measure over $\Omega$),
and consider the following properties for $\Omega$.

\begin{enumerate}
\item  There exists $C$ such that for a polynomial data of degree $m$ there always exists a polynomial solution of the Dirichlet problem posed on $\Omega$ of degree $\leq m + C$.

\item  There exists $N$ such that for all $k,m$, the solution of the Dirichlet problem with data $\bar{z}^kz^m$ is a harmonic polynomial of degree $\leq (N-1)k+m$
in $z$ and of degree $\leq (N-1)m +k$ in $\bar{z}$.

\item  There exists $N$ such that $\{p_m\}$ satisfy a finite $(N+1)$-recurrence relation, i.e. there are constants $a_{m-j,m}$ such that
$$zp_m = a_{m+1,m}p_{m+1}+a_{m,m}p_m+...+a_{m-N+1,m}p_{m-N+1}.$$

\item  The Bergman orthogonal polynomials of $\Omega$ satisfy a finite-term recurrence relation, i.e., for every fixed $\ell > 0$, there exists an $N(\ell) > 0$, such that
$a_{\ell,m} = \langle zp_m, p_\ell \rangle = 0$, $m \geq N(\ell)$.

\item  For any polynomial data there exists a polynomial solution of the Dirichlet problem posed on $\Omega$.
\end{enumerate}

Properties $(4)$ and $(5)$ are essentially equivalent \cite{PutSty}, and $(1) \Rightarrow (2)$, $(2) \Leftrightarrow (3)$, and $(3) \Rightarrow (4)$.  
In \cite{KhSt} the authors used ratio asymptotics of orthogonal polynomials to show that $(2)$ and equivalently $(3)$ each characterize ellipses.  
The weaker statement that $(1)$ characterizes ellipsoids was proved in arbitrary dimensions \cite{LundRend2011}.
For more about the Khavinson-Shapiro conjecture stated in \cite{KS92}, we refer the reader to \cite{ChS2001, Eben92, KhLund, KhSt, KS89, Khav96, Lund2009, LundRend2011, PutSty, Render, Render2010} and the references therein.

\section{The mean value property for harmonic functions}\label{sec:MVP}

The mean value property for harmonic functions can be rephrased as saying that
\emph{the average of any harmonic function over concentric balls is a constant}.
As we formulate precisely below, 
there is a mean value property for ellipsoids which says \emph{the average of any harmonic function over confocal ellipsoids is a constant}.

Consider a heterogeneous ellipsoid
$$\Gamma:= \left\{ x\in\mathbb{R}^N:\sum_{j=1}^N\frac{x_j^2}{a_j^2}-1=0, \quad a_1>a_2>\cdots >a_N>0 \right\},$$
and let $\Omega$ be its interior.

\begin{definition}
A family of ellipsoids $\left\{\Gamma_\lambda\right\}$,
$$
\Gamma_\lambda=\left\{x\in\mathbb{R}^N:\sum_{j=1}^N\frac{x_j^2}{a_j^2+\lambda}-1=0\right\},
$$
where $-a_N^2<\lambda<+\infty$ is called a confocal family \textup{(}for $N=2$ these are ellipses with the same foci\textup{)}.
\end{definition}

Note that the shapes of confocal ellipsoids differ, and 
as $\lambda \rightarrow \infty$, $\Gamma_\lambda$ look like a spheres.

Observe that when $\lambda \to - a_N^2$,
$$
\Gamma_\lambda\to\left\{x\in\mathbb{R}^N:
x_N=0,\sum_{j=1}^{N-1}\frac{x_j^2}{a_j^2+\lambda}-1=0\right\}=:E.
$$
$E$ is called the \textit{focal ellipsoid}.

The following classical theorem goes back to MacLaurin who considered prolate spheroids in $\RR^3$ ($a_1 > a_2 =a_3$). 
General ellipsoids were treated later by Laplace \cite[Ch. 2]{Mac58}.

\begin{figure}[h]
    \includegraphics[scale=.15]{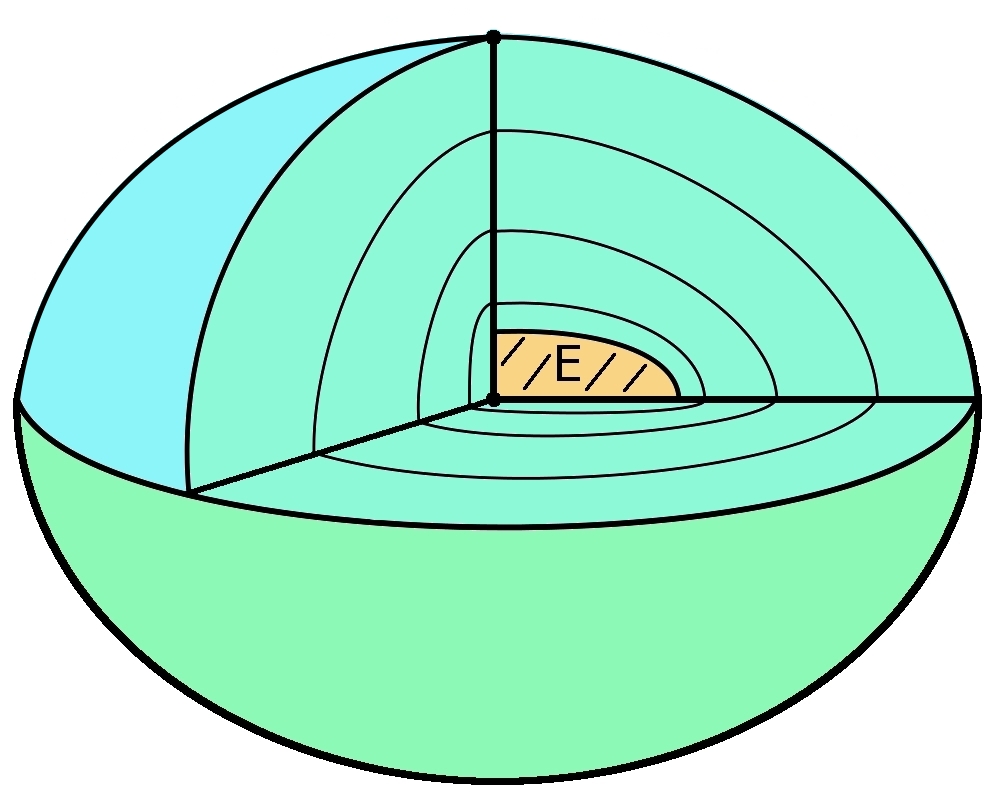}
    \caption{The mean value over confocal ellipsoids is constant.}
    \label{fig:confocal}
\end{figure}

\begin{thm}
Let $u$ be, say, an entire harmonic function. Then
\begin{equation}\label{eq:Mac}
\frac{1}{| \Omega_\lambda|}\int_{\Omega_\lambda}u(x) dx=\text{const.}
\end{equation}
for all $\lambda:\lambda>-a_N^2$.
\end{thm}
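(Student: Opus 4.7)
The plan is to show $M'(\lambda) \equiv 0$ for $M(\lambda) := |\Omega_\lambda|^{-1} \int_{\Omega_\lambda} u \, dx$. The central observation driving everything is that the quadratic form
\begin{equation*}
W(x) := \tfrac{1}{2}\sum_{j=1}^N \frac{x_j^2}{a_j^2 + \lambda}
\end{equation*}
is identically equal to $\tfrac12$ on $\partial \Omega_\lambda$, by the very definition of the confocal family; this is what will collapse the boundary integrals at the very end.

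First I would pull $\Omega_\lambda$ back to the unit ball $B \subset \mathbb{R}^N$ via the affine change of variables $y_j = x_j/\sqrt{a_j^2+\lambda}$. The Jacobian $P(\lambda) := \prod_j \sqrt{a_j^2+\lambda}$ appears in both numerator and denominator of $M(\lambda)$ and cancels, leaving the $\lambda$-independent domain representation $M(\lambda) = |B|^{-1} \int_B u\bigl(\sqrt{a_1^2+\lambda}\, y_1,\dots,\sqrt{a_N^2+\lambda}\, y_N\bigr)\, dy$. Differentiating under the integral sign in $\lambda$, applying the chain rule, and changing variables back to $x$ then produces, after a short routine computation, the clean identity
\begin{equation*}
M'(\lambda) \;=\; \frac{1}{2|\Omega_\lambda|}\int_{\Omega_\lambda} \nabla W \cdot \nabla u \, dx,
\end{equation*}
where I have used that $\partial_j W = x_j/(a_j^2+\lambda)$.

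The finishing move is Green's first identity, which rewrites the right-hand side as
\begin{equation*}
\frac{1}{2|\Omega_\lambda|}\left(\int_{\partial\Omega_\lambda} W\,\partial_n u\, dS \;-\; \int_{\Omega_\lambda} W\,\Delta u\,dx\right).
\end{equation*}
Harmonicity of $u$ kills the volume term outright. Because $W \equiv \tfrac12$ on $\partial \Omega_\lambda$, the constant pulls out of the boundary integral, reducing it to $\frac{1}{4|\Omega_\lambda|}\int_{\partial\Omega_\lambda} \partial_n u\, dS$, which is again zero by the divergence theorem applied to $\nabla u$ together with $\Delta u = 0$. Hence $M'(\lambda) \equiv 0$ on $(-a_N^2, \infty)$, and $M$ is constant.

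The only genuine obstacle I anticipate is purely technical: justifying differentiation under the integral sign over the whole range $\lambda \in (-a_N^2, \infty)$. This is however routine, since an entire harmonic $u$ and all of its partial derivatives are continuous on $\mathbb{R}^N$ and the pulled-back integrand depends smoothly on $\lambda$, so uniform domination on any compact $\lambda$-subinterval is immediate.
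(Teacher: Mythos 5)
Your proof is correct, and it takes a genuinely different route from the paper's. Your computation checks out: pulling back to the unit ball gives $M(\lambda)=|B|^{-1}\int_B u(\sqrt{a_1^2+\lambda}\,y_1,\dots,\sqrt{a_N^2+\lambda}\,y_N)\,dy$, differentiation (legitimate, since the integrand is smooth in $(\lambda,y)$ on $B$ times compact subintervals of $(-a_N^2,\infty)$, where all $a_j^2+\lambda>0$) yields $M'(\lambda)=\frac{1}{2|\Omega_\lambda|}\int_{\Omega_\lambda}\nabla W\cdot\nabla u\,dx$ with $W=\frac12\sum_j x_j^2/(a_j^2+\lambda)$, and Green's first identity together with $W\equiv\frac12$ on $\partial\Omega_\lambda$, $\Delta u=0$, and $\int_{\partial\Omega_\lambda}\partial_n u\,dS=\int_{\Omega_\lambda}\Delta u\,dx=0$ kills everything. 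The paper instead gives an algebraic proof: it reduces to homogeneous harmonic polynomials, invokes a lemma (proved via the Fischer inner product and a reproducing-kernel identity plus the Nullstellensatz) that $\mathcal{H}_m$ is spanned by the powers $(z\cdot\bar\xi)^m$ with $\xi$ on the isotropic cone $\sum\xi_j^2=0$, and then, after the same affine change of variables you use, reduces the claim to the rotation-invariance assertion that $\int_B(\sum t_kx_k)^m dx$ depends only on $\sum t_k^2$. Your deformation-in-$\lambda$ argument is more elementary and self-contained (no density lemma, no complexification), and it only requires $u$ to be harmonic on a neighborhood of the largest ellipsoid under consideration rather than entire; what the paper's route buys is the Fischer-space machinery it wants to advertise (used elsewhere in the article, e.g., in connection with the Khavinson--Shapiro conjecture) and the identification of exactly which polynomial identities on the isotropic cone drive the confocal invariance.
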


From now on, for the sake of brevity, we shall only consider the case $N\ge 3$.

\remark MacLaurin's theorem is a corollary (via a simple change of variables, see \cite[Ch. VI, Sec. 16]{CH} or \cite[Ch. 13]{Khav96}) of the following result of \'Asgeirsson \cite{Asg37}.

``Suppose $u=u(x,y)$, where $x\in\mathbb{R}^{m_1}$, $y\in\mathbb{R}^{m_2}$ satisfy the ultrahyperbolic equation
$$
\Delta_xu=\Delta_yu.
$$
Then if $\mu_i(x,y,r)$, $i=1,2$ denote respectively the mean values of $u$ over $m_i$-dimensional balls of radius $r$ centered at $(x,y)$, we have $\mu_1(x,y,r)=\mu_2(x,y,r)$.''

Here, we offer a purely algebraic approach to MacLaurin's theorem \cite{KS89}, \cite[Ch. 13]{Khav96}.
The following notions are due to E. Fischer \cite{Fischer} (see also \cite[Ch. IV]{SteinWeiss1971}). 
Let $H_k$ be the space of homogeneous polynomials of degree $k$. If $f\in H_k$, then
$$
f(z)=\sum_{|\alpha|=k}f_\alpha z^\alpha.
$$
Introduce an inner product on $H_k$ (called the Fischer inner product), by letting
\begin{equation}\label{eq:Fischer}
\left\langle z^\alpha,z^\beta\right\rangle
=\begin{cases}
0, & \alpha\ne\beta \\
\alpha!, & \alpha=\beta
\end{cases}.
\end{equation}
If $f=\sum\limits_{|\alpha|=k}f_\alpha z^\alpha$, $g=\sum\limits_{|\alpha|=k}g_\alpha z^\alpha$ then $\displaystyle \langle f,g\rangle=\sum_{|\alpha|=k}a!f_\alpha\overline{g_\alpha}. $

The main point of introducing such an inner product is that 
the operators $\left( \frac{\partial}{\partial z} \right)^\alpha$ and multiplication by $z^\alpha$ are adjoint with respect to the Fischer inner product.

Let $\mathcal{H}_m$ denote the space of homogeneous harmonic polynomials of degree $m$.
It follows from the definition (\ref{eq:Fischer}) that
$\frac{1}{m!}\left( z\cdot\bar{\xi}\right)^m $ 
is a reproducing kernel for $\mathcal{H}_m$, i.e., for all $f\in \mathcal{H}_m$,
\begin{equation*}
\frac{1}{m!}\left\langle f, \left( z\cdot\bar{\xi}\right)^m\right\rangle=f(\xi).
\end{equation*}
This fact, along with Hilbert's Nullstellensatz, easily yields the following lemma (see \cite[Ch. 13]{Khav96} for a detailed proof).

\begin{lemma}\label{lemma:span}
The linear span of harmonic polynomials $\left( z\cdot\bar{\xi} \right)^m$ for all $\xi\in\Gamma_0=\left\{\xi\in\mathbb{C}^N:\sum\limits_{j=1}^N\xi_j^2=0\right\}$ \textup{(}the isotropic cone\textup{)} equals $\mathcal{H}_m$.
\end{lemma}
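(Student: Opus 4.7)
The plan is to argue by Fischer duality. Since $\mathcal{H}_m$ is finite-dimensional, the span equals $\mathcal{H}_m$ if and only if no nonzero $f \in \mathcal{H}_m$ is orthogonal (in the Fischer inner product) to every $(z \cdot \bar{\xi})^m$ with $\xi \in \Gamma_0$. Before starting, I would quickly verify that these kernel candidates actually lie in $\mathcal{H}_m$: a direct computation gives $\Delta (z \cdot \bar{\xi})^m = m(m-1)(\bar{\xi} \cdot \bar{\xi})(z \cdot \bar{\xi})^{m-2}$, and $\bar{\xi} \cdot \bar{\xi} = \overline{\sum_j \xi_j^2} = 0$ precisely because $\xi \in \Gamma_0$. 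So the proposed spanning set indeed consists of homogeneous harmonic polynomials of degree $m$.

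Now suppose $f \in \mathcal{H}_m$ is Fischer-orthogonal to every $(z \cdot \bar{\xi})^m$ with $\xi \in \Gamma_0$. The reproducing identity displayed just before the lemma gives
$$f(\xi) = \frac{1}{m!} \langle f, (z \cdot \bar{\xi})^m \rangle = 0 \quad \text{for all } \xi \in \Gamma_0,$$
i.e., $f$ vanishes on the entire isotropic cone. Here I would invoke Hilbert's Nullstellensatz: for $N \geq 3$ the quadratic form $Q(z) := \sum_{j=1}^N z_j^2$ is irreducible in $\mathbb{C}[z_1, \dots, z_N]$, so $(Q)$ is prime, hence radical, with zero locus equal to $\Gamma_0$. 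Therefore $Q \mid f$ in $\mathbb{C}[z_1, \dots, z_N]$, and we may write $f = Q \cdot g$ for some homogeneous polynomial $g$ of degree $m-2$.

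The finish exploits the defining adjointness of the Fischer inner product: since multiplication by $z_j$ is adjoint to $\partial/\partial z_j$, multiplication by $Q = \sum z_j^2$ is adjoint to $\Delta$. Pairing $f$ with itself then yields
$$\|f\|^2 = \langle Q \cdot g, f \rangle = \langle g, \Delta f \rangle = 0,$$
using harmonicity of $f$ at the last step. Hence $f = 0$, contradicting the assumption, and the span is all of $\mathcal{H}_m$.

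The main obstacle is the second step, the invocation of the Nullstellensatz: one needs $Q$ to generate a radical (equivalently prime) ideal, which rests on the irreducibility of $Q$ over $\mathbb{C}[z_1, \dots, z_N]$. This is precisely where the restriction $N \geq 3$ enters — for $N = 2$, $Q$ splits as $(z_1 + iz_2)(z_1 - iz_2)$ and $\Gamma_0$ degenerates into two complex lines, so a separate (and easy) verification would be required there. Since the paper has already restricted to $N \geq 3$ for the remainder of the section, no further adjustments are needed.
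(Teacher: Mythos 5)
Your argument is correct and follows exactly the route the paper indicates: the reproducing-kernel identity forces any Fischer-orthogonal $f\in\mathcal{H}_m$ to vanish on the isotropic cone, the Nullstellensatz (with irreducibility of $\sum z_j^2$ for $N\ge 3$) gives $f=Q\cdot g$, and the adjointness of multiplication by $Q$ and $\Delta$ kills $\|f\|^2$. This is precisely the proof the paper defers to \cite[Ch.~13]{Khav96}, so nothing further is needed.
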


\begin{proof}[Proof of MacLaurin's Theorem]
It suffices to check (\ref{eq:Mac}) for harmonic homogeneous polynomials,
and in view of Lemma \ref{lemma:span}, we just have to check it for polynomials
$$
\left( z\cdot\bar{\xi}\right)^m,\quad\xi\in\Gamma_0.
$$
Fix $\lambda$. Let $b_i=\left(a_i^2+\lambda\right)^{1/2}$ be the semi-axes of $\Omega_\lambda$. We have to show that
$$
\frac{1}{| \Omega_\lambda |}\int_{\Omega_\lambda}\left( x\cdot\bar{\xi}\right)^mdx
=\frac{1}{|\Omega|}\int_\Omega\left( y\cdot\bar{\xi}\right)^mdy,\quad
\forall\xi\in\Gamma_0.
$$
Changing variables in both integrals 
$x_k = a_k x_k'$, $y_k = b_k y_k'$
we see that it suffices to show the following:
$$
\int_B\left(\sum_{k=1}^Na_k x_k\overline{\xi_k}\right)^mdx
=\int_B\left(\sum_{k=1}^N b_k x_k\overline{\xi_k}\right)^mdx,
$$
where $B$ is the unit ball in $\mathbb{R}^N$. 
Or, since for $\xi\in\Gamma_0$
$$
\sum_{k=1}^N \left( (a_k \xi_k)^2 - (b_k \xi_k)^2 \right) = -\lambda^2 \sum_{k=1}^N \xi_k^2 = 0,
$$
it suffices to check the following assertion.

\textbf{Assertion.} The polynomial
$$
P(t):=\int_B\left(\sum_{k=1}^Nt_kx_k\right)^mdx
$$
depends only on $\sum\limits_{k=1}^Nt_k^2$, for $t\in\mathbb{C}^N$.

The assertion follows at once from the rotation invariance of $P$ \cite[Ch. 13]{Khav96}, \cite{KS89}.
\end{proof}

The following application is noteworthy.
Let $\Omega$ be an ellipsoid with semiaxes $a_1>a_2>\dotsb>a_N>0$, and
$$
u_\Omega(x):=C_N\int_\Omega\frac{dy}{|x-y|^{N-2}},\quad x\in\mathbb{R}^N\setminus\Omega
$$
be the exterior potential of $\Omega$.

Recall that $E$ denotes the focal ellipsoid defined above.  The following corollary of MacLaurin's theorem describes a so-called \emph{mother body} \cite{Gust98}, 
i.e., a measure supported inside the ellipsoid which generates the same gravitational potential as the uniform density (outside the ellipsoid) 
but is in some sense minimally supported  (in this case supported on $E$, a set of codimension one with connected complement).

\begin{cor}\label{cor:motherbody}
For $x\in\mathbb{R}^N\setminus\bar{\Omega}$
$$
u_\Omega(x)=C_N\int_E\frac{d\mu(y)}{|x-y|^{N-2}},
$$
where
$$
d\mu(y)=2\left( \prod_{j=1}^Na_j\right)\left( \prod_{j=1}^{N-1}\left( a_j^2-a_N^2 \right) \right)^{-1/2}
\left(1-\sum_{j=1}^{N-1}\frac{y_j^2}{a_j^2-a_N^2}\right)^{1/2}dy'\mid_E
$$
\textup{(}$dy'$ is Lebesgue measure on $\left\{y_N=0\right\}$\textup{)}.
\end{cor}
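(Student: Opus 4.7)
My plan is to apply MacLaurin's theorem to the Newton kernel $k_x(y) := |x-y|^{-(N-2)}$, which is harmonic in $y$ whenever $x \neq y$, and then degenerate the confocal family onto the focal ellipsoid $E$. Since $\Omega_\lambda \subset \Omega$ for $\lambda \in (-a_N^2, 0]$ and $k_x$ is harmonic on an open neighborhood of $\bar{\Omega}$ whenever $x \notin \bar{\Omega}$, MacLaurin's identity (\ref{eq:Mac}) gives
\begin{equation*}
\frac{1}{|\Omega|}\int_\Omega \frac{dy}{|x-y|^{N-2}} = \frac{1}{|\Omega_\lambda|}\int_{\Omega_\lambda} \frac{dy}{|x-y|^{N-2}}, \qquad -a_N^2 < \lambda \leq 0.
\end{equation*}

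Next I would evaluate the right-hand side by letting $\lambda \to -a_N^2$, so that $\Omega_\lambda$ collapses onto $E$. Writing $\varepsilon := \sqrt{a_N^2 + \lambda}$ and rescaling $y_N = \varepsilon\, t$, the region $\Omega_\lambda$ becomes $\{\sum_{j=1}^{N-1} y_j^2/(a_j^2+\lambda) + t^2 \leq 1\}$ and $dy = \varepsilon\, dy'\, dt$. The volume splits as $|\Omega_\lambda| = |B|\,\varepsilon \prod_{j=1}^{N-1}\sqrt{a_j^2+\lambda}$, with $|B|$ the volume of the unit ball in $\mathbb{R}^N$, and the two powers of $\varepsilon$ cancel in the ratio. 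The inner $t$-integral of $|x-(y',\varepsilon t)|^{-(N-2)}$ over the symmetric interval $[-R_\lambda(y'), R_\lambda(y')]$ converges uniformly (in $y'$) to $2R_0(y')\,|x-y|^{-(N-2)}$, where $y = (y',0) \in E$ and $R_0(y') = \sqrt{1 - \sum_{j=1}^{N-1} y_j^2/(a_j^2 - a_N^2)}$, while the $y'$-domain converges to the $(N-1)$-dimensional focal ellipsoid. Collecting the surviving constants then reproduces exactly the density $d\mu$ stated in the corollary, the factor of $2$ coming from the symmetric $t$-range and the quotient $|\Omega|/(|B|\prod_{j=1}^{N-1}\sqrt{a_j^2 - a_N^2})$ supplying the rest.

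The main obstacle is justifying MacLaurin's theorem for the non-entire function $k_x$, since its harmonic polynomial expansion about the origin converges only on the ball $\{|y| < |x|\}$, which need not contain $\bar{\Omega}$. I would resolve this by approximation: entire harmonic functions are uniformly dense, on any fixed compact set, in the space of functions harmonic in a neighborhood of that compact set (a standard Runge-type result for harmonic functions). Apply the polynomial version of MacLaurin's identity to each approximant and pass to the uniform limit over $\bar{\Omega} \cup \bar{\Omega}_\lambda$. The subsequent passage to the degenerate limit $\lambda \to -a_N^2$ inside the Lebesgue integral is then routine dominated convergence, since $|x-y|$ is bounded below uniformly on $\bar{\Omega}_\lambda$ for $\lambda \in [-a_N^2, 0]$ whenever $x \notin \bar{\Omega}$.
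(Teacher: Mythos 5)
Your argument follows essentially the same route as the paper's (sketched) proof: apply MacLaurin's theorem to the Newton kernel $C_N|x-y|^{-(N-2)}$, which is harmonic near $\bar{\Omega}$ for $x\notin\bar{\Omega}$, then use Fubini and dominated convergence to collapse $\Omega_\lambda$ onto the focal ellipsoid $E$ as $\lambda\to -a_N^2$, and your bookkeeping of the constants correctly reproduces $d\mu$. Your extra step justifying the use of MacLaurin's theorem for the non-entire kernel via harmonic Runge-type (Walsh) approximation on $\bar{\Omega}$ is a legitimate and welcome filling-in of a point the paper passes over with ``since the integrand is harmonic.''
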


\begin{proof}[Sketch of proof]
Since the integrand is harmonic, we have by MacLaurin's theorem
$$
u_\Omega(x)=\frac{\prod\limits_{j=1}^Na_j}
{\prod\limits_{j=1}^N\left(a_j^2+\lambda\right)^{1/2}}
\int_{\Omega_\lambda}v(y)\,dy,
$$
where we set $v(y):\frac{C_N}{|x-y|^{N-2}}$.
After simplifying this integral using Fubini's theorem, 
the corollary is established by applying the Lebesgue dominated convergence theorem as $\lambda \rightarrow - a_n^2$ \cite[Ch. 13]{Khav96}.
\end{proof}

We note in passing that finding relevant mother bodies for oblate and prolate spheroids (supported on a disk and segment respectively) could be a satisfying exercise.

Since the density of the distribution $d\mu$ is real analytic in the interior of $E$ (viewed as a set in $\RR^{n-1}$) we note the following corollary:
\begin{cor}
The potential $u_\Omega(x)$ extends as a \textup{(}multivalued\textup{)} harmonic function into $\mathbb{R}^N\setminus\partial E$.
\end{cor}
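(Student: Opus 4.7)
The plan is to use Corollary~\ref{cor:motherbody} to extend $u_\Omega$ first to a single-valued harmonic function on $\mathbb{R}^N\setminus E$, and then to use analytic reflection across the relatively open part $E^\circ$ of the focal ellipsoid to continue further, producing the claimed multi-valued extension to $\mathbb{R}^N\setminus\partial E$.

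First, I would observe that the right-hand side of the motherbody representation,
$$V(x):=C_N\int_E\frac{d\mu(y)}{|x-y|^{N-2}},$$
converges absolutely and defines a harmonic function of $x$ on the open set $\mathbb{R}^N\setminus E$, which is connected because $E$ is a solid $(N-1)$-dimensional ellipsoid lying inside a single hyperplane of $\mathbb{R}^N$. Since $V=u_\Omega$ on $\mathbb{R}^N\setminus\overline{\Omega}$ by Corollary~\ref{cor:motherbody}, the identity principle for harmonic functions at once promotes $V$ to the unique single-valued harmonic extension of $u_\Omega$ to $\mathbb{R}^N\setminus E$.

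Next, I would fix $y_0\in E^\circ$, i.e.\ a point with $(y_0)_N=0$ and $\sum_{j<N}(y_0)_j^2/(a_j^2-a_N^2)<1$, and work in a small ball around $y_0$. The radicand in the density of $d\mu$ is strictly positive at $y_0$, so $\rho$ is real analytic in a neighborhood of $y_0$ inside the hyperplane $\{y_N=0\}$; locally, $V$ is then a single-layer potential with real-analytic density supported on a piece of a real-analytic hypersurface. The classical jump relations show that $V$ is continuous across $\{y_N=0\}$ near $y_0$ while its normal derivative jumps by a nonzero real-analytic multiple of $\rho$. The key step is to invoke the Cauchy-Kovalevskaya theorem on the Cauchy problem $\Delta u=0$ with each one-sided real-analytic Cauchy datum of $V$ on $\{y_N=0\}$: this continues $V|_{y_N>0}$ and $V|_{y_N<0}$ as harmonic functions into full two-sided neighborhoods of $y_0$. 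The two local extensions share the same trace on $\{y_N=0\}$ but have distinct normal derivatives, so they furnish two genuinely different harmonic branches crossing $E^\circ$.

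Gluing the global branch $V$ on $\mathbb{R}^N\setminus E$ to these local two-branch reflections through $E^\circ$ then yields a multi-valued harmonic function on $\mathbb{R}^N\setminus\partial E$. The main obstacle---and the only locus where the argument must break down---is $\partial E$: there the square root in $\rho$ degenerates to a branch point, the Cauchy data cease to be real analytic, and the reflection step fails. A loop in $\mathbb{R}^N\setminus\partial E$ that links $\partial E$ (a codimension-two submanifold of $\mathbb{R}^N$) interchanges the two reflection branches, which is exactly why the extension is genuinely multi-valued on $\mathbb{R}^N\setminus\partial E$. Beyond the real-analytic reflection step, which is a standard application of Cauchy-Kovalevskaya to single-layer potentials with analytic data, I do not anticipate any substantive difficulty.
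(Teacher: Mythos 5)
Your route is the one the paper intends: combine the mother-body representation of Corollary \ref{cor:motherbody} with the real analyticity of the density of $d\mu$ on the interior of $E$ to continue $u_\Omega$ harmonically across $E^\circ$ from either side, the branch locus being $\partial E$, where the square root in the density degenerates (the paper itself offers nothing beyond this remark and a pointer to \cite{Khav96}). However, one step of your write-up does not hold up as stated. You invoke ``the classical jump relations'' to conclude that the one-sided Cauchy data of $V$ on $\{y_N=0\}$ near $y_0$ are real analytic, and then apply Cauchy--Kovalevskaya to those data. The jump relations give continuity of $V$ across the support and the jump formula for the normal derivative, but they say nothing about analyticity of the one-sided traces; real analyticity of the restriction of a single-layer potential to its supporting surface (where the density is analytic) is essentially equivalent to the two-sided harmonic continuation you are trying to prove, so as written this step is circular, or at best appeals without proof to a nontrivial fact (analytic hypoellipticity of the relevant trace operator).

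The standard repair keeps your plan but builds the Cauchy--Kovalevskaya input from the density rather than from traces of $V$: near $y_0$ let $w$ solve $\Delta w=0$ with data $w=0$ and $\partial_{y_N}w=\tfrac12\,[\partial_N V]$ on $\{y_N=0\}$, where the jump $[\partial_N V]$ is an explicit real-analytic multiple of $\rho$ by the formula for $d\mu$ in Corollary \ref{cor:motherbody}. These data are analytic because $\rho$ is, so $w$ exists in a full two-sided neighborhood of $y_0$ and is odd in $y_N$. Then $V-\operatorname{sgn}(y_N)\,w$ has matching values and normal derivatives from the two sides, is harmonic off the hyperplane, and hence is harmonic across $E^\circ$ (removability of a $C^1$ interface); consequently $V|_{y_N>0}$ and $V|_{y_N<0}$ continue harmonically across $E^\circ$ as $\bigl(V-\operatorname{sgn}(y_N)w\bigr)+w$ and $\bigl(V-\operatorname{sgn}(y_N)w\bigr)-w$, two branches differing by $2w\not\equiv0$, exactly the picture you describe. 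One further caveat: your local reflection establishes continuation across $E^\circ$ and hence the multivalued extension in the sense the corollary is used here, but the stronger global statement that the continuation proceeds along \emph{every} path in $\RR^N\setminus\partial E$ (including paths that re-enter the region after crossing, which requires continuing the local branch $V+2w$ onward) is not settled by this local argument alone; that is precisely the ``high ground'' content of Johnsson's theorem discussed in Section \ref{sec:CP}.
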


An extension of this fact and a ``high ground'' explanation based on holomorphic PDE in $\CC^n$ is discussed in Section \ref{sec:CP}.

\section{The equilibrium potential of an ellipsoid. Ivory's Theorem}\label{sec:Ivory}

Considering that force is the gradient of potential,
the following theorem, due to Newton, can be paraphrased in a rather catchy way: ``there is no gravity in the cavity''.

\begin{thm}[Newton's theorem]
\label{thm:Newton}
Let $t>1$, and consider the ellipsoidal shell $S := t\Omega \setminus \Omega$ between two homothetic ellipsoids.
The potential $U_S$ of uniform density on $S$ is constant inside the cavity $\Omega$. 
\end{thm}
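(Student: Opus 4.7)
My plan is to give Newton's own geometric proof, which establishes that the gravitational field $-\nabla U_S$ vanishes identically on the cavity $\Omega$; since $\Omega$ is connected, this forces $U_S$ to be constant there.

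Fix a point $P\in\Omega$ and a direction $\hat n\in S^{N-1}$. Consider the infinitesimal double cone with apex $P$, axis $\pm\hat n$, and solid angle $d\omega$. The line $\{P+s\hat n:s\in\RR\}$ meets $\partial\Omega$ at parameter values $s_B<0<s_A$ and $\partial(t\Omega)$ at values $s_{B'}<s_B<0<s_A<s_{A'}$; the cone accordingly carves out two slabs of the shell $S$ of respective lengths $\ell_+=s_{A'}-s_A$ and $\ell_-=s_B-s_{B'}$. In $\RR^N$ the volume element in the cone is $r^{N-1}\,d\omega\,dr$, while the force per unit density exerted by a point mass at distance $r$ is proportional to $r^{-(N-1)}$; the two factors cancel, so the two cone-slabs contribute forces of magnitudes proportional to $\ell_\pm$ directed along $\pm\hat n$, respectively.

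The entire argument thus reduces to the identity $\ell_+=\ell_-$. Writing $Q(x)=\sum_{j=1}^N x_j^2/a_j^2$ and $L(P,\hat n)=\sum_{j=1}^N P_j\hat n_j/a_j^2$, substitution of $P+s\hat n$ into $Q(x)=1$ and $Q(x)=t^2$ yields the two quadratics
\begin{align*}
Q(\hat n)\,s^2 + 2\,L(P,\hat n)\,s + (Q(P)-1)&=0, \\
Q(\hat n)\,s^2 + 2\,L(P,\hat n)\,s + (Q(P)-t^2)&=0.
\end{align*}
The coefficients of $s^2$ and $s$ are identical, so Vieta's formulas give $s_A+s_B=s_{A'}+s_{B'}$, and rearranging yields $s_{A'}-s_A = s_B-s_{B'}$, i.e., $\ell_+=\ell_-$. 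This is the only place where the homothety between the two ellipsoids is used.

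Integrating the cancelling contributions over the hemisphere of directions $\hat n$ then shows $\nabla U_S(P)=0$. The one step that requires care, rather than substantive difficulty, is justifying the infinitesimal bookkeeping: rigorously, one writes $-\nabla U_S(P)=-(N-2)C_N\int_S (P-y)|P-y|^{-N}\,dy$ and passes to spherical coordinates centered at $P$, where $dy = r^{N-1}\,d\omega\,dr$ and the integrand is smooth because $P\notin\overline S$. The cone cancellation then takes place inside the radial integral, and the algebraic identity from Vieta's formulas is the only nontrivial ingredient.
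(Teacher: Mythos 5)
Your argument is correct: the reduction of the force at an interior point to radial line integrals via spherical coordinates centered at $P$, the observation that the ray-intercept length $\ell(\omega)$ of the shell is an even function of the direction $\omega$ (which is exactly your Vieta computation, valid because the two ellipsoids share the quadratic and linear coefficients, differing only in the constant term $1$ versus $t^2$), and the resulting vanishing of $\int_{S^{N-1}}\omega\,\ell(\omega)\,d\omega$ together give $\nabla U_S(P)=0$ on the connected cavity, hence constancy. This is Newton's classical ``equal intercepts'' cone argument, and your remark that the integrand is smooth because $P\notin\overline S$ disposes of the only analytic issue. It is, however, a genuinely different route from the one the paper has in mind: the paper states the theorem without an in-text proof and, in the Epilogue, sketches the Arnold--Givental approach, in which the force at $x_0$ contributed by the pair of shell points cut out by a line $\ell$ through $x_0$ is realized as a sum of residues of a contour integral in the complexified line $L$, and vanishes upon deforming the contour to infinity (detailed in \cite[Ch.~14]{Khav96}). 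Your proof is more elementary and self-contained, but it is tied to the quadratic nature of the boundary through the equal-intercept identity; the residue argument is what allows the paper to extend Newton's theorem beyond ellipsoids to domains of hyperbolicity of real algebraic hypersurfaces of arbitrary degree, with the standard single layer density $\rho=1/|\nabla q|$, which is the point of the Epilogue. One small caveat: your force kernel $r^{-(N-1)}$ and the factor $N-2$ presuppose $N\ge 3$ (consistent with the paper's standing convention); for $N=2$ the same cancellation works with the logarithmic kernel, whose gradient is again $\propto r^{-1}$, so the argument survives with only notational changes.
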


In fact, ellipsoids are characterized by this property, i.e., Newton's theorem has a converse \cite{DF86, Di, Ni, Karp, Khav96}.

A consequence of Newton's theorem is that the gravitational potential $U_\Omega$ of $\Omega$ is a quadratic polynomial inside $\Omega$. 
Namely,
$$U_\Omega(x) = B - \sum_{i=1}^N A_j x_j^2, \quad \text{for } x \in \Omega,$$
with $B=C_N \int_\Omega \frac{dV(y)}{|y|^{N-2}} = U_\Omega(0)$,
where $C_N=\frac{1}{\Vol (S^{N-1})}$.
Indeed, denoting by $\Omega_t = t \Omega$ (for $t>1$) the dilated ellipsoid, 
one computes that its gravitational potential is $u_t(x)=t^2u(x/t)$. 
Since Newton's theorem implies that ($u$ is the potential of the original ellipsoid), 
$u_t-u =const$ inside $\Omega$, the smaller ellipsoid, 
then taking partial derivatives $\partial^\alpha$, w.r.t. $x$, $|\alpha| = 2$, 
yields that $\partial^\alpha u_t(x)$= $\partial^\alpha u(x/t)=\partial^\alpha u(x)$.
Thus all these partial derivatives are homogeneous of degree zero inside $\Omega$. 
They are also obviously continuous and, hence, are constants, thus yielding $U_\Omega$ to be a quadratic as claimed.

Denoting $\Gamma := \p \Omega$, consider the single layer potential
$$ V(x) = C_N \int_{\Gamma} \frac{\rho (y)}{|x-y|^{N-2}} dA(y), $$
where
$\rho(y)$ is the mass density and $dA(y)$ on $\Gamma$ is the surface area measure.
$V(x)$ is called an \emph{equilibrium potential} if $V(x) \equiv 1$ on $\Gamma$ and hence inside $\Omega$.
For the sake of brevity we focus on $N \geq 3$ leaving the case $N=2$
as an exercise.
The quantity 
$$\sigma := \lim_{|x| \rightarrow \infty} |x|^{N-2} V(x) = C_N \int_{\Gamma} \rho(y) dA(y)$$
is called capacity.

On the way to proving Ivory's theorem, we note an explicit formula for the equilibrium potential.
Again, $N \geq 3$ \cite{Khav96, KS89}.
 

\begin{cor}
With $B$ as above, in $\RR^N \setminus \ol{\Omega}$
\begin{equation}\label{eq:Ivory}
 V(x) =  \frac{1}{B}\left( \hat{\mu} - \frac{1}{2} \sum_{i=1}^N x_i \frac{\p \hat{\mu}}{\p x_i} \right),
\end{equation}
where $\hat{\mu}(x) = C_N \int_E \frac{d\mu(y')}{|x-y|^{N-2}}$,
$y'=(y_1,y_2,..,y_{N-1},0)$,
and $d\mu(y')$ is the MacLaurin ``quadrature measure'' supported on the focal ellipsoid $E$ (cf. Cor. \ref{cor:motherbody}).
\end{cor}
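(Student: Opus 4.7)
The plan is to construct $BV$ directly from the Newtonian potential $u_\Omega$ by applying a simple first-order differential operator and then to invoke the mother-body identity in the exterior. Two ingredients from the preceding sections drive the argument: Newton's theorem (Theorem \ref{thm:Newton}), which gives $u_\Omega(x)=B-\sum_{j=1}^N A_j x_j^2$ throughout $\ol\Omega$, and Corollary \ref{cor:motherbody}, which gives $u_\Omega(x)=\hat\mu(x)$ for $x\in\RR^N\setminus\ol\Omega$. It therefore suffices to verify that the function
$$W(x):=u_\Omega(x)-\tfrac{1}{2}\sum_{i=1}^N x_i\,\p_i u_\Omega(x)$$
coincides with $BV(x)$ everywhere in $\RR^N$; substituting $\hat\mu$ for $u_\Omega$ in the exterior then yields \eqref{eq:Ivory}.

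The first step is to evaluate $W$ inside $\Omega$. The operator $1-\tfrac{1}{2}(x\cdot\nabla)$ acts as $1-d/2$ on any polynomial that is homogeneous of degree $d$, so it fixes constants and annihilates every quadratic monomial $x_j^2$. Applied to Newton's quadratic, this gives instantly $W\equiv B$ on $\Omega$, i.e.\ the interior side of the claimed identity.

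The second step is to show that $W$ solves the same exterior Dirichlet problem as $BV$. Harmonicity outside $\Omega$ follows from the general identity
$$\Delta\bigl((x\cdot\nabla)u\bigr)=2\Delta u+(x\cdot\nabla)\Delta u,$$
which implies that $(x\cdot\nabla)u_\Omega$ is harmonic wherever $u_\Omega$ is. Decay of $W$ at infinity is routine, since $u_\Omega(x)=O(|x|^{-(N-2)})$ and each $x_i\,\p_i u_\Omega$ inherits the same rate from $\p_i u_\Omega(x)=O(|x|^{-(N-1)})$. The boundary condition $W|_\Gamma=B$ is the one place where some analytic input is needed: one appeals to the classical fact that the Newtonian volume potential of a bounded density lies in $C^1(\RR^N)$, so $u_\Omega$ and its first derivatives match across $\Gamma$, making $W$ continuous there with boundary value $B$ computed from the interior side. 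Uniqueness for the exterior Dirichlet problem with vanishing data at infinity now forces $W/B\equiv V$ on $\RR^N\setminus\ol\Omega$, and replacing $u_\Omega$ by $\hat\mu$ there gives exactly \eqref{eq:Ivory}.

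The main obstacle is the boundary matching: one has to be sure that the exterior expression $\hat\mu-\tfrac{1}{2}(x\cdot\nabla)\hat\mu$ takes the same boundary value on $\Gamma$ as the constant $B$ produced from the interior quadratic. Algebraically this boils down to the $C^1$-continuity of $u_\Omega$ across $\Gamma$, which is standard potential-theoretic machinery; once it is invoked, the remainder of the proof is essentially the degree-counting observation about $1-\tfrac{1}{2}(x\cdot\nabla)$ combined with uniqueness of the exterior Dirichlet problem.
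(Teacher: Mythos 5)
Your proof is correct and is essentially the paper's own argument: both rest on Newton's quadratic for $u_\Omega$ inside $\Omega$, the MacLaurin/mother-body identity $u_\Omega=\hat\mu$ outside, the $C^1$-continuity of the volume potential across $\Gamma$ to get the boundary value, harmonicity of $u-\tfrac12\,x\cdot\nabla u$ off the masses, and uniqueness of the exterior Dirichlet problem with decay at infinity. The only difference is cosmetic — you phrase the boundary computation as the operator $1-\tfrac12(x\cdot\nabla)$ annihilating quadratics and fixing constants, where the paper carries out the same cancellation explicitly on $\Gamma$.
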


\begin{proof}

The RHS of (\ref{eq:Ivory}) is harmonic in $\RR^N \setminus \ol{\Omega}$
(in fact, in $\RR^N \setminus E$) since $\hat{\mu}$ is harmonic there and $\Delta( x \cdot \nabla \hat{\mu}) = n \Delta \mu = 0$.
On $\Gamma$, by MacLaurin's theorem and Newton's theorem
\begin{equation}\label{eq:mu}
 \hat{\mu} = U_\Omega(x) = B - \sum_{i=1}^N A_j x_j^2  .
\end{equation}
Moreover since $U_\Omega(x)$ has continuous first derivatives throughout $\RR^N$, we can differentiate (\ref{eq:mu})
on $\Gamma$ and thus obtain
\begin{equation*}
 \frac{1}{B}\left( \hat{\mu} - \frac{1}{2} \sum_{i=1}^N x_i \frac{\p \hat{\mu}}{\p x_i} \right) = \frac{1}{B} \left( B - \sum_{i=1}^N A_j x_j^2 +  \frac{1}{2}\sum_{i=1}^N 2 A_j x_j^2 \right) = 1
\end{equation*}
Thus, the RHS of (\ref{eq:Ivory}) equals $V(x)$ on $\Gamma$.
Both functions are harmonic in $\RR^N \setminus \ol{\Omega}$ and vanish at infinity and the statement follows.
\end{proof}
\begin{cor}[Ivory's theorem]
The equipotential surfaces of the equilibrium potential $V(x)$ are confocal with $\Gamma$.
\end{cor}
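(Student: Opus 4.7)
The plan is to upgrade the computation used to show $V \equiv 1$ on $\Gamma = \Gamma_0$ in the proof of the previous corollary, replacing the pair (Newton's theorem, MacLaurin's theorem) applied to $\Omega$ by the same pair applied to the enlarged confocal ellipsoid $\Omega_\lambda$. The underlying observation is this: if, on $\Gamma_\lambda$, both $\hat{\mu}$ and $\nabla\hat{\mu}$ can be written in terms of a quadratic of the form $B(\lambda) - \sum_j A_j(\lambda) x_j^2$ (up to a $\lambda$-dependent multiplicative scalar), then substituting into the explicit formula (\ref{eq:Ivory}) triggers exactly the same cancellation of quadratic terms that gave $V = 1$ on $\Gamma_0$, leaving behind a quantity that depends only on $\lambda$.

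To produce such a representation, I would first apply MacLaurin's theorem to the confocal pair $(\Omega,\Omega_\lambda)$. Since $\Omega \subset \Omega_\lambda$ for $\lambda > 0$, for any $z$ in the open exterior of $\bar{\Omega}_\lambda$ the Newton kernel $y \mapsto |z-y|^{2-N}$ is harmonic on all of $\Omega_\lambda$, so (\ref{eq:Mac}) yields
\begin{equation*}
U_\Omega(z) \;=\; \frac{|\Omega|}{|\Omega_\lambda|}\, U_{\Omega_\lambda}(z), \qquad z \in \RR^N \setminus \bar{\Omega}_\lambda.
\end{equation*}
Next, Newton's theorem (Theorem \ref{thm:Newton}) applied to $\Omega_\lambda$ gives $U_{\Omega_\lambda}(y) = B(\lambda) - \sum_j A_j(\lambda) y_j^2$ throughout $\Omega_\lambda$. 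Newtonian volume potentials of bounded densities are $C^1(\RR^N)$, so both the MacLaurin identity above and its gradient extend by continuity from $\RR^N \setminus \bar{\Omega}_\lambda$ to the boundary $\Gamma_\lambda$; combined with $\hat{\mu} = U_\Omega$ on $\RR^N \setminus \bar{\Omega}$ (Corollary \ref{cor:motherbody}), this gives, at every $x \in \Gamma_\lambda$,
\begin{equation*}
\hat{\mu}(x) \;=\; \frac{|\Omega|}{|\Omega_\lambda|}\Bigl(B(\lambda) - \textstyle\sum_j A_j(\lambda) x_j^2\Bigr), \qquad \partial_i \hat{\mu}(x) \;=\; -\,\frac{2|\Omega|}{|\Omega_\lambda|}\,A_i(\lambda)\,x_i.
\end{equation*}

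Substituting into (\ref{eq:Ivory}), the cross-term $-\tfrac12 \sum_i x_i\,\partial_i\hat{\mu}$ contributes $+\frac{|\Omega|}{|\Omega_\lambda|}\sum_j A_j(\lambda)x_j^2$, cancelling the quadratic piece of $\hat{\mu}$, and one is left with
\begin{equation*}
V(x) \;=\; \frac{|\Omega|\,B(\lambda)}{B\,|\Omega_\lambda|}, \qquad x \in \Gamma_\lambda,
\end{equation*}
a quantity depending only on $\lambda$. Hence $V$ is constant on each $\Gamma_\lambda$, which is exactly Ivory's theorem.

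The only technical subtlety is the $C^1$-extension of the MacLaurin identity and of its gradient from the open exterior $\RR^N \setminus \bar{\Omega}_\lambda$ up to the boundary $\Gamma_\lambda$ itself; this is a standard regularity fact for Newton potentials with bounded density and is not a real obstacle. Beyond that, the proof is really just the observation that the algebraic cancellation already used at $\lambda = 0$ in the previous corollary is parametric in $\lambda$: Newton's theorem supplies the crucial quadratic structure on every confocal $\Omega_\lambda$, and MacLaurin's theorem transports that structure back onto the potential of the original ellipsoid.
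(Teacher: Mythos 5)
Your proof is correct, and it reaches the conclusion by a route that is organized differently from the paper's one-line argument. The paper applies formula (\ref{eq:Ivory}) to the confocal ellipsoid $\Omega_\lambda$ itself: since all members of the confocal family share the same focal ellipsoid $E$ and their MacLaurin measures are proportional, $d\mu_\lambda/d\mu=|\Omega_\lambda|/|\Omega|$, the right-hand side of (\ref{eq:Ivory}) for $\Omega_\lambda$ is a constant multiple of the one for $\Omega$; hence $V$ is proportional to the equilibrium potential $V_\lambda$ of $\Gamma_\lambda$ outside $\bar\Omega_\lambda$, and since $V_\lambda\equiv 1$ on $\Gamma_\lambda$, $V$ is constant there. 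You never invoke $V_\lambda$ or the scaling of the mother-body measure; instead you evaluate the fixed formula (\ref{eq:Ivory}) directly on $\Gamma_\lambda$, using MacLaurin's theorem for the pair $(\Omega,\Omega_\lambda)$ to write $\hat\mu=U_\Omega=\frac{|\Omega|}{|\Omega_\lambda|}U_{\Omega_\lambda}$ near $\Gamma_\lambda$, Newton's theorem on $\Omega_\lambda$ for the quadratic form of $U_{\Omega_\lambda}$, and the global $C^1$ regularity of volume potentials to match values and gradients on $\Gamma_\lambda$ --- a parametric rerun of the computation the paper performs only at $\lambda=0$. Your constant $|\Omega|B(\lambda)/(B|\Omega_\lambda|)$ agrees with the value $B_\lambda|\Omega|/(B|\Omega_\lambda|)$ that the paper's proportionality argument yields, which is a good consistency check. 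What the paper's route buys is brevity and the conceptually appealing intermediate statement that confocal ellipsoids have proportional exterior equilibrium potentials; what yours buys is independence from the explicit density formula of Corollary \ref{cor:motherbody} (you only need $\hat\mu=U_\Omega$ outside $\bar\Omega$), at the cost of redoing the cancellation and the boundary $C^1$ matching on each $\Gamma_\lambda$. One small caveat, which applies equally to the paper's own sketch of Corollary \ref{cor:motherbody}: MacLaurin's theorem is stated for entire harmonic functions, while the Newton kernel centered at $z\notin\bar\Omega_\lambda$ is harmonic only on a neighborhood of $\bar\Omega_\lambda$; this suffices because the confocal family relating $\Omega$ to $\Omega_\lambda$ stays inside $\bar\Omega_\lambda$, but it deserves the one-sentence remark you essentially give.
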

For the proof, one simply notes that the RHS of (\ref{eq:Ivory})
changes only by a constant factor when $\Omega$ is replaced by a confocal ellipsoid
$$\Omega_\lambda := \left\{ x : \sum_{}^N \frac{x_j^2}{a_j^2 + \lambda} \leq 1, \lambda \geq 0 \right\}.$$
Namely, $B \rightarrow B_\lambda$ while $\frac{d \mu_\lambda}{d \mu} = \frac{\Vol(\Omega_\lambda)}{\Vol(\Omega)}$.

For the classical proof of Ivory's Theorem, see \cite{Mac58}, \cite[Lecture 30]{FuchsTab2007}.

\section{Ellipsoids in fluid dynamics}

Let us pause for a moment and apply these properties of ellipsoids 
to two problems in fluid dynamics.
In the first problem, involving a slowly moving interface, viscosity plays an important role.
In the second problem, viscosity is completely neglected, while vorticity plays the dominant role.

\subsection{Moving interfaces and Richardson's theorem}

Imagine a blob of incompressible viscous fluid within a porous medium surrounded by an inviscid fluid.
Suppose there is a sink at position $x_0$ in the region occupied by viscous fluid.
Averaging the Navier-Stokes equations
over pores \cite{Brinkman49} leads to Darcy's law for the fluid velocity $v$ in terms of the pressure $P$
\begin{equation}\label{eq:Darcy}
v = -\nabla P.
\end{equation}
Incompressibility implies that 
\begin{equation*}
\nabla \cdot v = -\Delta P = 0,
\end{equation*}
except at the sources/sinks.
The pressure of the inviscid fluid is assumed constant.
Neglecting surface-tension (by far, the most controversial of these assumptions \cite{Howison86, Mineev98, Tanveer}) the pressure matches at the interface,
which gives a constant (assume zero) boundary condition for $P$,
so $P$ is nothing more than the harmonic Green's function with a singularity at $x_0$.
The mathematical problem is then to track the evolution of a domain $\Omega_t$ whose boundary velocity is determined by the gradient of its own Green's function.
See \cite{EV92} for an engaging exposition of the two-dimensional case of this problem.

Given a harmonic function $u(x)$, Richardson's theorem \cite{Rich72} describes the time dependence of 
the integration of $u$ over the domain occupied by the viscous fluid.
In the language of integrable systems this represents ``infinitely many conservation laws''.

\begin{thm}[S. Richardson, 1972]
 Let $u(x)$ be a function harmonic in $\Omega_t$ for all $t$. 
 Then
 \begin{equation}\label{eq:rich}
  \frac{d}{dt} \int_{\Omega_t} u(x) dV(x) = -Q u(x_0) ,
 \end{equation}
 where $x_0$ is the position of the sink with pumping rate $Q>0$.
\end{thm}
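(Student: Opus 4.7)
The plan is to reduce $\frac{d}{dt}\int_{\Omega_t} u\,dV$ to a boundary integral via Reynolds' transport theorem, use Darcy's law (\ref{eq:Darcy}) to trade the normal boundary velocity for a normal derivative of the pressure $P$, and then recover the point evaluation at $x_0$ by applying Green's second identity on a domain that excises a small ball around the sink.

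Concretely, I would first observe that $u(x)$ has no explicit $t$-dependence and that $\partial\Omega_t$ is a material interface moving with the fluid velocity $v$, so Reynolds' formula gives
\begin{equation*}
\frac{d}{dt}\int_{\Omega_t} u(x)\,dV(x)=\int_{\partial\Omega_t} u\,(v\cdot n)\,dA.
\end{equation*}
By (\ref{eq:Darcy}) and the free-boundary condition $P\equiv 0$ on $\partial\Omega_t$, this equals $-\int_{\partial\Omega_t} u\,\partial_n P\,dA$. Since $P$ is singular at $x_0$, I would then apply Green's second identity on $\Omega_t\setminus \overline{B_\varepsilon(x_0)}$ to the pair $(u,P)$, both harmonic there. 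Using $P=0$ on $\partial\Omega_t$ and harmonicity of $u$, this collapses to
\begin{equation*}
\int_{\partial\Omega_t} u\,\partial_n P\,dA = \int_{\partial B_\varepsilon(x_0)}\bigl(u\,\partial_r P-P\,\partial_r u\bigr)dA,
\end{equation*}
where $\partial_r$ denotes the outward radial derivative from $x_0$. A standard fundamental-solution computation (using $\Delta P=Q\,\delta_{x_0}$ distributionally, which is exactly the sink condition) shows that the first term tends to $Q\,u(x_0)$ as $\varepsilon\to 0$, while the second is $O(\varepsilon)\to 0$. Combining these yields (\ref{eq:rich}).

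The main technical obstacle is precisely this handling of the $x_0$-singularity, together with justifying Reynolds' formula for a free-boundary flow. One needs $\partial\Omega_t$ to remain sufficiently smooth (say, $C^1$) and the Lagrangian flow map to be a $C^1$ diffeomorphism, so that the kinematic identity ``normal velocity of $\partial\Omega_t$ equals $v\cdot n$'' holds pointwise; this is the implicit regularity assumption behind the statement, valid on any time interval preceding the cusps and topological singularities that the Hele-Shaw evolution may develop. No ellipsoidal structure of $\Omega_t$ is used in the argument, which is why (\ref{eq:rich}) supplies ``infinitely many conservation laws'': varying $u$ over all functions harmonic on $\overline{\Omega_t}$ produces a continuum of rigid constraints on $\Omega_t$ in terms of $(x_0,Q)$.
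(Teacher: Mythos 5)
Your proof is correct and is essentially the classical argument (Reynolds transport plus Darcy's law plus Green's identity with a small ball excised at the sink); the paper itself states Richardson's theorem without proof, citing \cite{Rich72}, and your route coincides with that standard one. The only cosmetic remark: in two dimensions the discarded term is $O(\varepsilon\log(1/\varepsilon))$ rather than $O(\varepsilon)$, which of course still vanishes.
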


An alternative setup places the viscous fluid in an unbounded domain with a single sink at infinity \cite{DF86};
a reformulation of Richardson's theorem implies that the potential inside the cavity of the shell regions $\Omega_t \setminus \Omega_{s>t}$ is constant.
Thus, it is a consequence of Newton's Thm. \ref{thm:Newton} that an increasing family of homothetic ellipsoids is an exact solution. 
In fact, this is the only solution starting from a bounded inviscid fluid domain that exists for all time and fills the entire space \cite{DF86} (also, cf. \cite{Karp}).

\begin{figure}[h]
    \includegraphics[scale=.3]{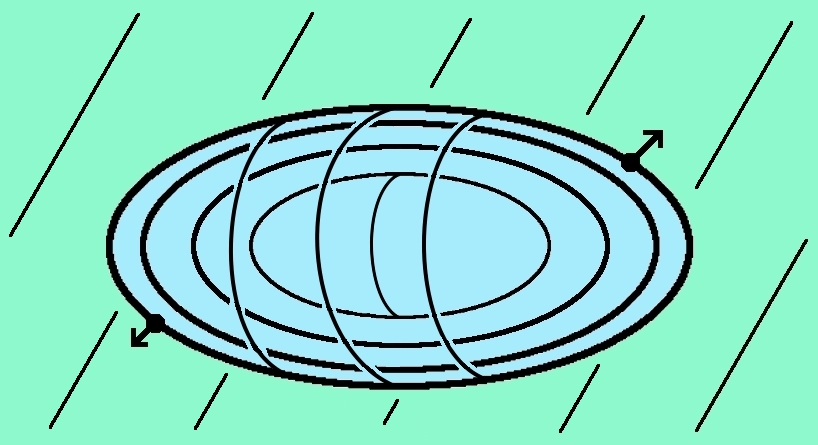}
    \caption{Viscous fluid occupies the exterior.  The ellipsoid grows homothetically.}
    \label{fig:LG}
\end{figure}

Returning to the case when the viscous fluid is bounded, suppose the initial domain $\Omega_0$ is an ellipsoid
and consider the problem of determining 
sinks and pumping rates such that $\{ \Omega_t \}_{t=0}^T$ shrinks to zero volume as $t \rightarrow T$.
As a consequence of the mean value property,
one can solve this problem exactly thus removing all of the fluid
provided we can stretch our imaginations to allow a continuum of sinks.
Starting from the given ellipsoid $\Omega_0$, 
the evolution $\Omega_t$ is a family of ellipsoids confocal to $\Omega_0$ shrinking down to the (zero-volume) focal set $E$,
and the pumping rate is given by the time-derivative of the quadrature measure appearing in Corollary \ref{cor:motherbody}.

\subsection{The quasigeostrophic ellipsoidal vortex model}

Based on the observation that motion in the atmosphere is roughly stratified into horizontal layers,
the quasigeostrophic approximation \cite{Ph63} provides a simplified version of the Euler equations (governing inviscid incompressible flow).
Further assumptions reduce the entire dynamics to
a scalar field, the potential vorticity,
which in the high Reynold's number limit, forms 
coherent regions of uniform density \cite{RY52}.
Even with these simplifications, the problem can still be quite complicated.
For instance, approximating the regions of potential vorticity by clouds of point-vortices,
one encounters the notoriously difficult $n$-body problem.

\begin{figure}[h]
    \includegraphics[scale=3]{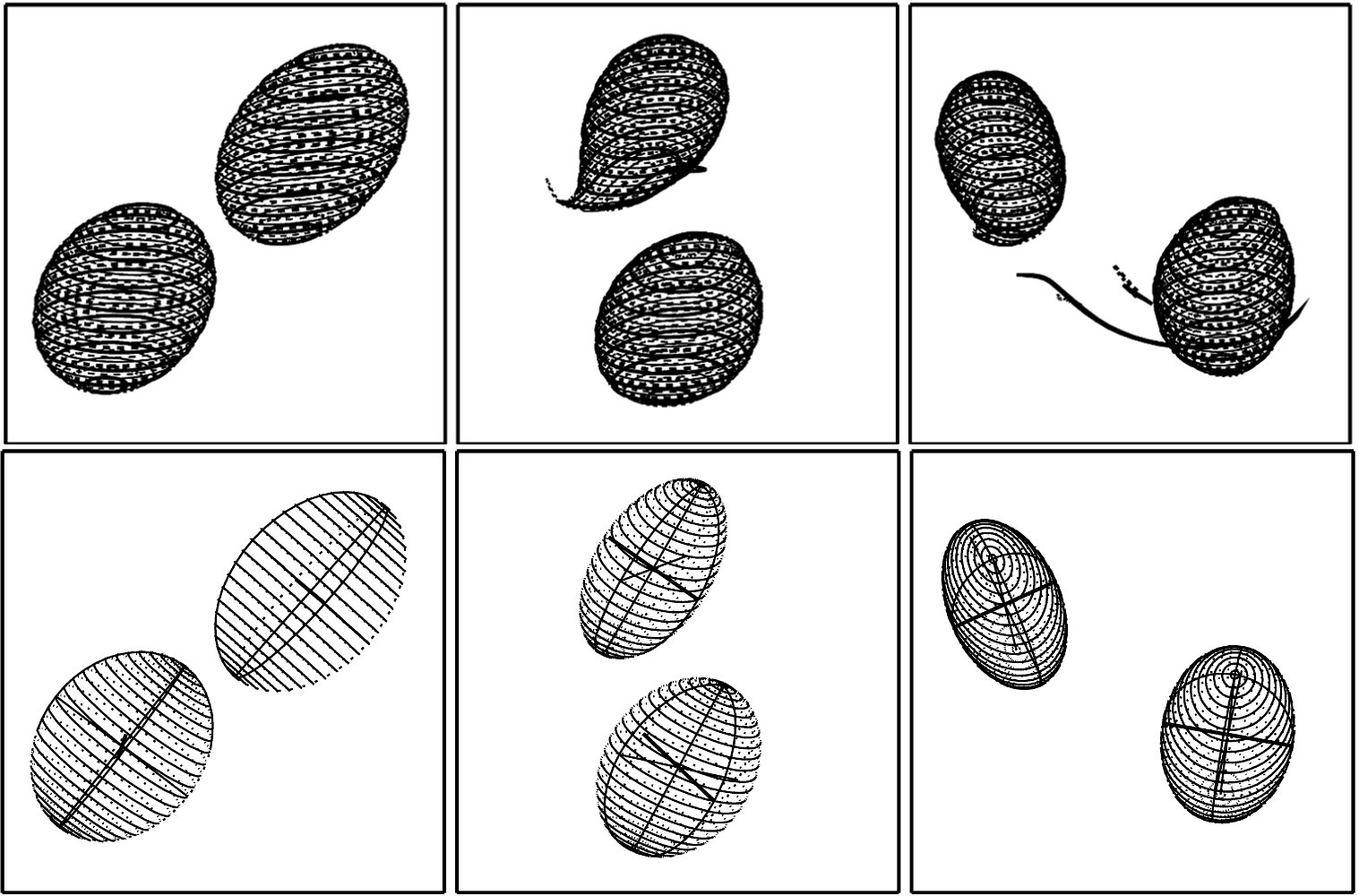}
    \caption{Top row: A vortex simulation using ``contour dynamics''.  
    Bottom row: A faster, but still accurate, simulation using the ellipsoidal vortex model.}
    \label{fig:Vortex}
\end{figure}

The \emph{quasigeostrophic ellipsoidal vortex model} developed
by Dritschel, Reinaud, and McKiver \cite{DRM2004},
simulates the interaction of ellipsoidal regions of vorticity (see Fig. \ref{fig:Vortex}, included here with their kind permission).
As these regions interact, the length and alignment of semiaxes can change, but non-ellipsoidal deformations are filtered out.
(Note that a single ellipsoid is stable for a certain range of axis ratios \cite{DSR2005}.)
The effect that one ellipsoid has on another is determined by its exterior potential,
and thus the mean value property can be used to replace the ellipsoid by a two-dimensional set of potential vorticity on its focal ellipse
(with density determined by Corollary \ref{cor:motherbody}) which can be further approximated by point vortices.

\begin{figure}[h]
    \includegraphics[scale=.2]{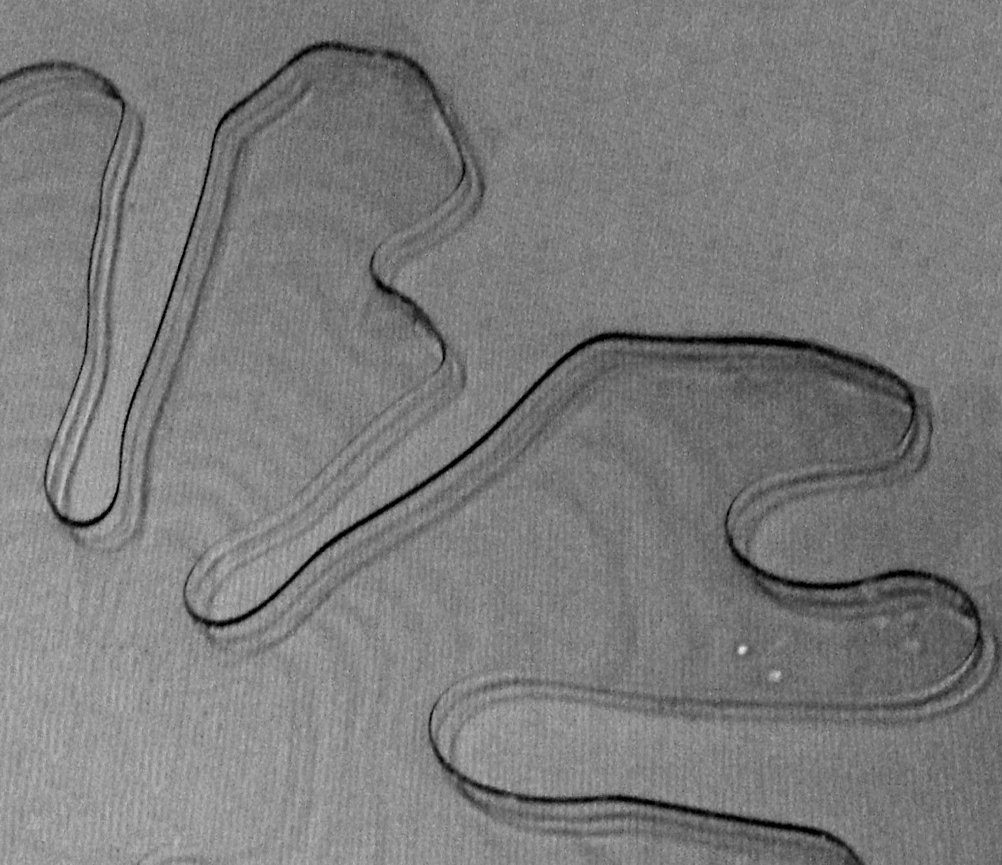}
    \caption{Viscous fingering in a Hele-Shaw cell.}
    \label{fig:HS}
\end{figure}

\remark It is interesting to single out the two-dimensional 
case of the moving interface problem which serves as a model for viscous fingering in a Hele-Shaw cell \cite{GustVas}.
Conformal mapping techniques lead to explicit exact solutions \cite{Mineev} that can even exhibit the tip-splitting depicted in Fig. \ref{fig:HS}.
The vortex dynamics problem also admits many sophisticated analytic solutions in the two-dimensional case \cite{CrowdyMarshall}. 
For a compelling survey discussing \emph{quadrature domains} as a 
common thread linking these and several other fluid dynamic problems, see \cite{Crowdy}.

\begin{figure}[h]
\begin{center}
    \includegraphics[scale=.36]{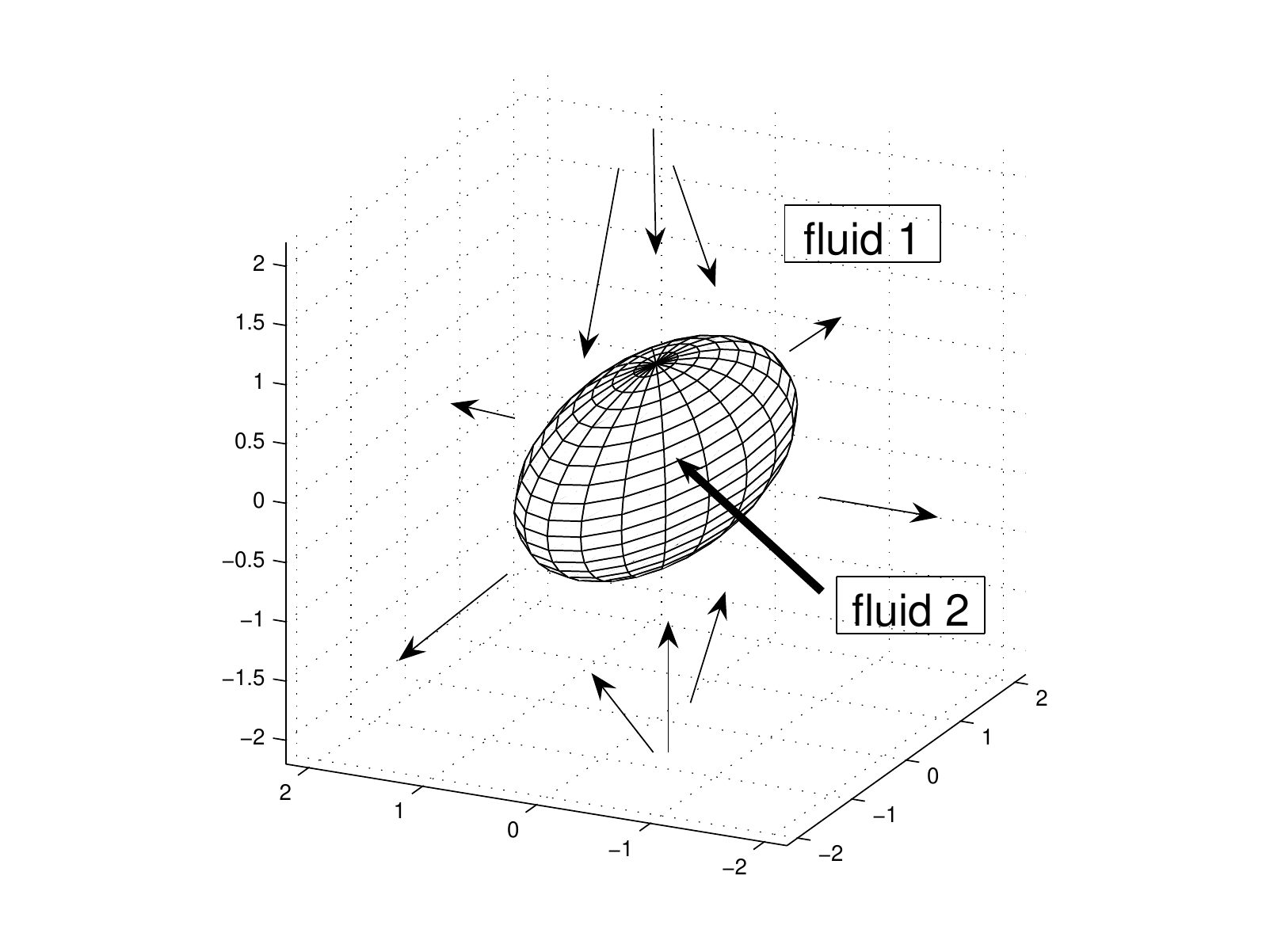}
    \includegraphics[scale=.44]{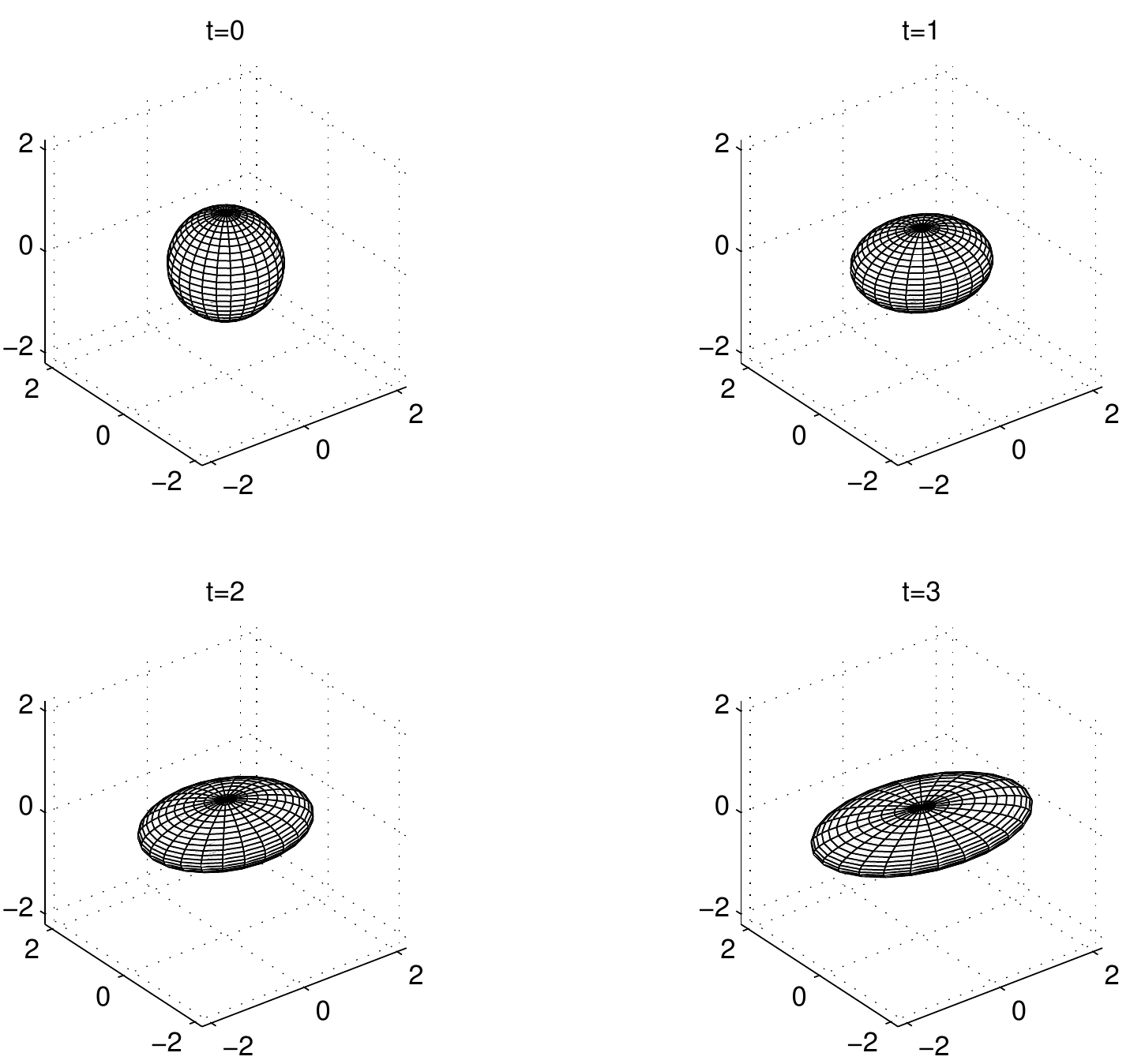}
    \caption{An ellipsoidal region of viscous fluid surrounded by a second fluid with different viscosity amid a porous medium.  
    Under a straining flow (linear in the far field), the evolution remains ellipsoidal.}
\end{center}
    \label{fig:TwoPhase}
\end{figure}

In yet another physically distinct setting, ellipsoids appear as exact solutions to a certain two-phase problem
in fluid dynamics \cite{BuchakCrowdy}.
In this case there are no sources or sinks, but rather a linear straining flow at infinity (see Fig. \ref{fig:TwoPhase}).
The (fixed-volume) ellipsoid changes shape but remains an ellipsoid (see \cite{BuchakCrowdy} for details).

\section{The Cauchy problem: A view from $\CC^n$}\label{sec:CP}

The problem mentioned in Section \ref{sec:MVP} of analytically continuing the exterior potential $U_{\Omega}$ inside the region $\Omega$ occupied by mass
was studied by Herglotz \cite{Herglotz1914},
and can be reformulated as studying the singularities of the solution to the following Cauchy problem posed on the initial surface $\Gamma:= \p \Omega$.
\begin{equation}
\label{eq:MSP}
\left\{
\begin{array}{l}
\Delta M = 1 \quad \text{near } \Gamma\\
M \equiv_{\Gamma}  0
\end{array}\right.,
\end{equation}
where the notation $M\equiv_{\Gamma} 0$ indicates that $M$ along with its gradient vanishes on $\Gamma$.

The fact that $M$ carries the same singularities in $\Omega$ as the analytic continuation $u$ of $U_\Omega$ is a consequence of the fact that
$u$ itself is given by the piecewise function
\begin{equation}\label{eq:piecewise}
u:=
\left\{
\begin{array}{l}
U_{\Omega}, \text{ outside } \Omega \\
U_{\Omega} - M, \text{ inside } \Omega
\end{array}\right..
\end{equation}
The reason is that $u$ is harmonic on both sides of $\Gamma$ and $C^1$-smooth across $\Gamma$.
An extension of Morera's theorem (attributed to S. Kovalevskaya) implies that $u$ is actually harmonic
across $\Gamma$, i.e., $\Gamma$ is a removable singularity set for $u$.
Thus, $u$ is the desired analytic continuation of $U_\Omega$ across $\Gamma$,
and the singularities of $u$ in $\Omega$ are carried by $M$.

Further reformulating the problem, note that the so-called \emph{Schwarz potential} of $\Gamma$,
$W = \frac{1}{2}|x|^2 - M$, has the same singularities as $M$ and solves a Cauchy problem for Laplace's equation:
\begin{equation}
\label{eq:SP}
\left\{
\begin{array}{l}
\Delta W = 0 \quad \text{near } \Gamma \\
W \equiv_{\Gamma} \frac{1}{2} |x|^2
\end{array}\right..
\end{equation}

This is a rather delicate (ill-posed according to Hadamard) problem,
and our discussion of it will pass from $\RR^n$ to the complex domain $\CC^n$.
Let us first consider
the more intuitive Cauchy problem for a hyperbolic equation where similar behavior can be observed while staying in the real domain.
\begin{equation}
\label{eq:hyperbolic}
\left\{
\begin{array}{l}
v_{xy} = 1 \quad \text{near } \gamma \\
v \equiv_{\gamma} 0
\end{array}\right.,
\end{equation}
where $\gamma$ is, say, a real analytic curve in $\RR^2$.

For hyperbolic equations the mantra is ``singularities propagate along characteristics''.
If the solution is singular at some point $(x_0,y_0)$, 
then one can trace the source of this singularity back to $\gamma$ by following the characteristic cone with vertex at $(x_0,y_0)$.
One expects to find a singularity in the data itself at a point where this cone intersects $\gamma$,
but what if the data function has no singularities as in (\ref{eq:hyperbolic})?
It is still possible for a singularity to propagate to the point
$(x_0,y_0)$ if the characteristic cone from $(x_0,y_0)$ is tangent to $\gamma$.
The point of tangency is called a \emph{characteristic point} of $\gamma$.

\begin{figure}[h]
    \includegraphics[scale=.5]{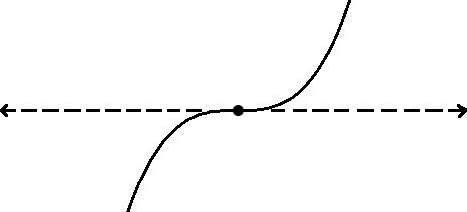}
    \caption{The solution to (\ref{eq:hyperbolic}) is regular except on the tangent characteristic $\{y=0\}$.}
    \label{fig:char}
\end{figure}

For example, suppose $\gamma := \{ y=x^3 \}$.
We can solve (\ref{eq:hyperbolic}) exactly:
$$v(x,y) = x \cdot y - \frac{x^{4}}{4} - \frac{3}{4}y^{4/3}.$$
The solution is singular on the characteristic $\{y=0 \}$ which is tangent to the initial curve $\gamma$ at the point $(0,0)$.

The singularities in the solution of (\ref{eq:SP}) also propagate along tangent characteristics,
but the characteristic points (the ``birth places'' of singularities) reside on the complexification of $\Gamma$,
the complex hypersurface given by the same defining equation.

\begin{figure}[h]
    \includegraphics[scale=.4]{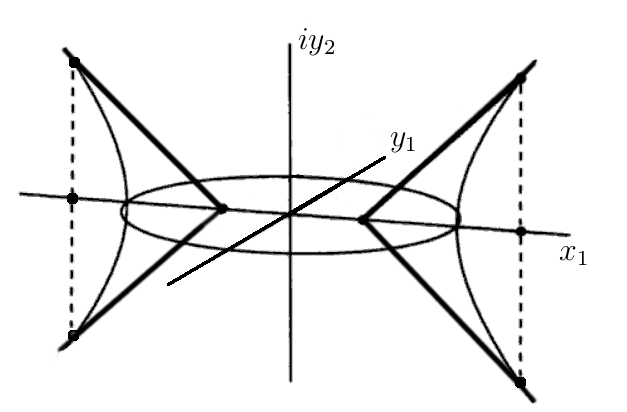}
    \caption{The characteristic lines tangent to $\Gamma$ at four characteristic points intersect $\RR^2$ precisely at the foci.}
    \label{fig:CS}
\end{figure}

\begin{thm}[G. Johnsson, \cite{Johnsson94}]
All solutions of the Cauchy problem (\ref{eq:SP})
with entire data $f$ on $\Gamma := \left\{z\in\mathbb{C}^n:\sum\limits_1^nz_1^2/a_i^2=1\right\}$ 
extend holomorphically along all paths in $\CC^n$ that avoid the characteristic surface $\Sigma$ 
(consisting of all characteristic lines tangent to $\Gamma$).
\end{thm}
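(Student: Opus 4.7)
The plan is to combine local solvability via the Cauchy--Kovalevskaya theorem with a global analytic continuation argument based on Leray's principle that singularities of solutions to analytic PDE can only propagate along characteristics.

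First, Cauchy--Kovalevskaya provides a holomorphic germ of the solution $W$ near any non-characteristic point of $\Gamma$. A point $p\in\Gamma$ is characteristic for $\Delta$ when the complex conormal $\nabla q(p)$ is isotropic, i.e.\ $\sum (\partial q/\partial z_i)^2=0$; for $q=\sum z_i^2/a_i^2-1$ this picks out the algebraic subset where $\sum z_i^2/a_i^4=0$ on $\Gamma$. Away from this subset the initial value problem is well-posed analytically and yields a germ of $W$.

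Second, I would invoke Leray's theorem on the propagation of singularities for second-order analytic PDE: a singularity of the solution at a point $z_*$ must be traceable back, along a characteristic through $z_*$, either to a singularity of the data or to a characteristic point of the initial surface. Since $f$ is entire, the data contributes no singularities, so the only ``birth places'' of singularities are the characteristic points of $\Gamma$. Propagating these along null lines and collecting them yields precisely the surface $\Sigma$ described in the statement.

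Third, to turn this into a global statement I would realize the analytic continuation explicitly. Given a path $\gamma:[0,1]\to \CC^n\setminus\Sigma$ starting at a regular germ, I form, at each parameter value, the characteristic conoid with vertex $\gamma(t)$ (the cone of null lines through $\gamma(t)$) and represent the value of $W$ at $\gamma(t)$ by a Leray-type residue integral over the intersection cycle $\mathrm{Conoid}(\gamma(t))\cap\Gamma$. Because $\gamma(t)\notin\Sigma$, no null line through $\gamma(t)$ is tangent to $\Gamma$; hence the intersection is transverse and the integration cycle varies continuously in $t$. This produces a single-valued analytic element along $\gamma$, giving the desired holomorphic continuation.

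The main obstacle is the third step: making the integral representation rigorous and verifying that the cycle of integration can indeed be deformed continuously along $\gamma$ without encountering a vanishing cycle, i.e., without collapse at a tangency. Since $\Sigma$ is exactly the locus where such a tangency occurs, this is where the quadratic structure of $\Gamma$ must be exploited in earnest: one uses the one-parameter family of null lines through $\gamma(t)$ and the fact that $\Gamma$ is a smooth quadric to reduce the monodromy analysis to a tractable problem about the dual variety of $\Gamma$, whose real trace (in the $n=2$ picture of Figure \ref{fig:CS}) passes through the foci. Controlling this monodromy globally on $\CC^n\setminus\Sigma$ is the real content of Johnsson's argument.
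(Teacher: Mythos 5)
Your outline follows exactly the route the paper attributes to Johnsson --- local Cauchy--Kovalevskaya germs away from the characteristic points of the complexified quadric, Leray's local principle identifying those points as the only possible ``birth places'' of singularities, and then a globalization of that principle. The paper itself offers no proof (it is a survey statement cited from \cite{Johnsson94}), so the comparison is with Johnsson's argument, and your strategy is the right one in spirit.

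However, as written your proposal has a genuine gap, and you yourself locate it: the third step \emph{is} the theorem, not a technical afterthought. Leray's principle is purely local near $\Gamma$; it says nothing a priori about what happens to the solution after continuation far from the initial surface, so without a concrete global representation you cannot conclude that singularities remain confined to $\Sigma$ along an arbitrary path in $\CC^n\setminus\Sigma$. The missing ingredients are (i) an actual global integral representation of the solution of (\ref{eq:SP}) (Johnsson builds one adapted to the quadric, and this is precisely where the hypothesis that the data is \emph{entire} enters --- note that your sketch uses entirety only to say ``the data contributes no singularities,'' which is a local statement; if the data were holomorphic merely in a neighborhood of $\Gamma$ the local Leray analysis would look identical, yet the global conclusion would fail, so the entirety must be exploited in the global step), and (ii) a rigorous isotopy/monodromy argument showing the integration cycle over $\mathrm{Conoid}(\gamma(t))\cap\Gamma$ deforms continuously, without vanishing cycles, as long as $\gamma(t)$ avoids $\Sigma$; transversality of the intersection alone does not produce a well-defined analytic element unless the representation formula has first been established and its cycle class tracked homologically. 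Until those two points are carried out --- which is the bulk of Johnsson's forty-page paper --- what you have is a correct plan rather than a proof.
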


The intersection $\Sigma \cap \RR^n = E$ is the focal ellipsoid.
According to the properties of the Schwarz potential discussed above, 
this provides a $\CC^n$-explanation of a rather physical fact that $E$ supports a measure solving an inverse potential problem.
Johnsson's proof of this theorem can be described as a globalization of Leray's principle, 
a local theory governing propagation of singularities.
As Johnsson notes, there is an unexpected coincidence between potential-theoretic foci (points where singularities of $W$ are located)
and algebraic foci in the classical sense of Pl\"ucker \cite{Johnsson94}.
Understanding this correspondence and extending it to higher-degree algebraic surfaces
is part of a program advocated by the first author and H. S. Shapiro.
The case $n=2$ is more transparent \cite{KS90},
and for $n>2$ it is virtually unexplored except for some axially-symmetric fourth-degree examples \cite{Lund2011}.

\section{Epilogue}

Newton's theorem can be reformulated in terms of a single layer potential obtained by shrinking a constant-density ellipsoidal shell to zero thickness (while rescaling the constant),
leading to a non-constant density $\rho(x) = 1 / |\nabla q(x)|$, where $q(x)$ is the defining quadratic of the ellipsoid.
This is sometimes called the \emph{standard single layer potential} (it is different from the equilibrium potential discussed in Section \ref{sec:Ivory}).
The modern approach due to V. I. Arnold and, then, A. Givental \cite{Arnold, Givental}, views
the force at $x_0$ induced by infinitesimal charges at two points $x_1, x_2$ on a line $\ell$ through $x_0$ as a sum of residues for a contour integral in the complex extension $L$ of $\ell$.
The vanishing of force follows from deforming the contour to infinity.
The detailed proof can be found in \cite[Ch. 14]{Khav96}.

\begin{figure}[h]
    \includegraphics[scale=.3]{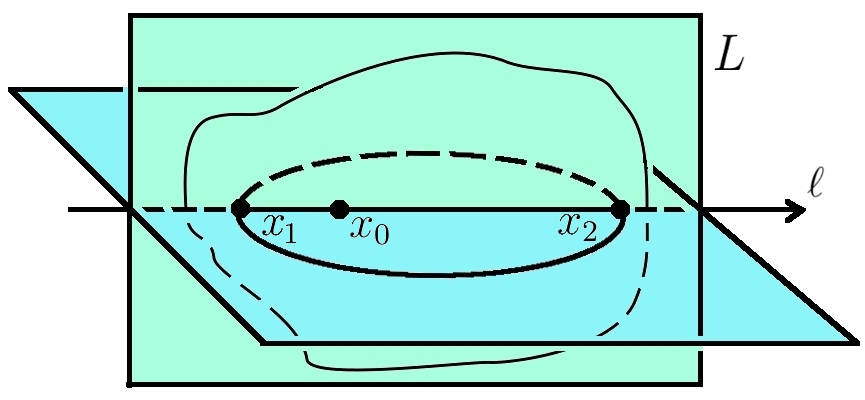}
    \caption{The force from two points is realized as a sum of residues in the complex line $L$. }
    \label{fig:givental}
\end{figure}

The same proof can be used to extend Newton's theorem beyond ellipsoids
to any \emph{domain of hyperbolicity} of a smooth, irreducible real algebraic variety $\Gamma$ of degree $k$.
A domain $\Omega$ is called a domain of hyperbolicity for $\Gamma$ if for any $x_0 \in \Omega$,
each line $\ell$ passing through $x_0$ intersects $\Gamma$ at precisely $k$ points.
For example, the interior of an ellipsoid is a domain of hyperbolicity,
and if a hypersurface of degree $2k$ consists of an increasing family of $k$ ovaloids then the smallest one is the domain of hyperbolicity.

Defining the standard single layer density on $\Gamma$ in exactly the same way as before, except that
the sign $+$ or $-$ is assigned on each connected component of $\Gamma$ depending whether the number of obstructions for ``viewing'' this component from the domain of hyperbolicity of $\Gamma$ is even or odd,
the Arnold-Givental generalization of Newton's theorem implies, in particular, that the force due to the standard layer density vanishes inside the domain of hyperbolicity (cf. \cite{Arnold, Givental} for more general statements and proofs).

As a final remark, returning to ellipsoids, and even taking $n=2$,
let us note an application to gravitational lensing of Corollary \ref{cor:motherbody}.
The two-dimensional version of MacLaurin's theorem
plays a key role in formulating analytic descriptions for the
gravitational lensing effect for certain elliptically symmetric lensing galaxies \cite{FKK, KhLund, BE, Rhie} (cf. \cite{KhN, Petters, PLW} for terminology).
Here the projected mass density that is constant on confocal ellipses produces at most 4 lensed images \cite{FKK}. 
The density that is constant on homothetic ellipses produces at most 6 images \cite{BE}, also cf. \cite{KhLund}. 
The same technique that applies MacLaurin's theorem to density that is not constant but is constant on each scaled ellipse
can also be applied to the case when the ellipses are allowed to rotate as they are scaled.
This leads to a lensing equation involving a Gauss hypergeometric function that describes the images lensed by a spiral galaxy \cite{BEFKL} (an investigation initiated during an REU).
In connection to the converse to Newton's theorem, 
whenever the rare focusing effect in graviataional lensing produces a continuous ``halo'' (aka Einstein ring - cf. \cite{KhN} for some striking NASA pictures) around the lensing galaxy (of any shape), 
the "halo" necessarily turns out to be either a circle or an ellipse \cite{FKK}. But this alley leads to the beginning of another story.

\bibliographystyle{amsplain}

\end{document}